\newcommand{\andname}{and}
\newcommand{\volumename}{volume}
\newcommand{\Inname}{in}
\newcommand{\ofname}{of}
\newcommand{\pagesname}{pages}
\newcommand{\deuxpoints}{:}
\let\ifanglais\iftrue
\def\R{{\mathbb R}}
\def\N{{\mathbb N}}
\def\vol{{\rm Vol}}
\newcommand{\ed}[1]{\textrm{d} #1}
\newtheoremstyle{mesthm}
  {10pt plus 1pt minus 1pt}
  {9pt plus 1pt minus 6pt}
  {\slshape}
  {0.5cm}
  {\bfseries}
  {.}
  {1ex}
  {}
\newtheoremstyle{mesdefi}
  {6pt plus 1pt minus 1pt}
  {6pt plus 1pt minus 1pt}
  {}
  {0.5cm}
  {\bfseries}
  {.}
  {1ex}
  {}%
\theoremstyle{mesthm}
\newtheorem{lema}{\ifanglais{\large L}emma\else{\large L}emme\fi}
\newtheorem{theo}[lema]{\ifanglais{\large T}heorem\else {\large
    T}h\'eor\`eme\fi}
\newtheorem{prop}[lema]{{\large P}roposition}  
\newtheorem{cor}[lema]{{\large C}orollary}
\newtheorem{conj}{\ifanglais{\large C}onjecture\else{\large C}onjecture\fi}
\theoremstyle{mesdefi}
\newtheorem{rmq}[lema]{\ifanglais{\large R}emark\else{\large
    R}emarque\fi}
\newcounter{step}
\DeclareMathOperator{\ent}{Ent}
\title[Asymptotic volume in Hilbert Geometries]%
{Asymptotic volume in Hilbert Geometries}
\author[C. Vernicos]{Constantin Vernicos}
\address{%
  Institut de math\'ematique et de mod\'elisation de Montpellier\\ 
  Universit\'e Montpellier 2 \\
  Case Courrier 051\\
  Place Eug\`ene Bataillon \\
  F--34395 Montpellier Cedex\\ 
  France} 
\email{Constantin.Vernicos@math.univ-montp2.fr}
\subjclass[2000]{53C60 (primary), 53C24, 58B20, 53A20 (secondary).}
\begin{document}

\begin{abstract}
We prove that the metric balls of a Hilbert geometry admit a volume growth at least polynomial of degree
their dimension.
We also characterise the convex polytopes as those having exactly polynomial volume growth of degree their dimension.
\end{abstract}

\maketitle

\section*{Introduction and statement of results}

We recall that Hilbert geometries are metric space defined in the interior
of a convex set using cross-ratios and as such are a generalisation of the
hyperbolic geometry. 

Among all Hilbert geometries, two families have emerged and play an important role.
On the one hand the polytopal ones, which for a given dimension are all
bi-lipschitz to the Hilbert geometry of the simplex (\cite{bernig,ver08,cvv3}), 
and on the other hand those whose boundary is
$C^2$ with positive Gaussian curvature, which are all bi-lipschitz to the Hyperbolic space (\cite{cpv}).

The present paper focuses on the volume of balls and was motivated by the following result due
to Burago and Ivanov\cite{bi95}:
\begin{theo}
  Let $(T^n,g)$ be a Riemannian torus, let $\omega_n$ be the euclidean volume of the euclidean
unit ball, and let $x$ be a point on the universal covering of $T^n$. Let also 
$B_g(x,r)$ be the metric ball of radius $r$ of the lifted metric centred at $x$. Then
\begin{itemize}
\item $\displaystyle{\text{Asvol(g)}=\lim_{r\to +\infty}\dfrac{\vol(B_g(x,r)}{r^n}\geq \omega_n}$;
\item Equality characterises flat tori.
\end{itemize}
\end{theo}
The author had the belief that such a statement may exists characterising
the simplexes among all Hilbert geometries. With that goal in mind we obtained a partial answer to that
question and a new characterisation of Polytopes in term of  their volume growth
as follows:

\begin{theo}
  There exists a constant $c_n>0$ such that for all Hilbert geometries $(\mathcal{C},d_{\mathcal C})$,
with $\mathcal C\subset \R^n$, for any point $p\in \mathcal{C}$ and any real number $r>0$, 
if one denotes by $\vol_{\mathcal{C}}$ the Busemann volume of $\mathcal{C}$, then we have
\begin{itemize}
\item $\vol_{\mathcal{C}}\bigl(B_\mathcal{C}(x,r)\bigr)\geq c_n r^n$;
\item The asymptotic volume is finite if and only if $\mathcal C$ is a polytope;
\item If $\mathcal C$ is a polytope with $k$ verticies, then one has
$$
\underline{{\rm Asvol}}(\mathcal{C})=\liminf_{r\to +\infty}\dfrac{\vol(B_{\mathcal C}(x,r)}{r^n}\geq c_n k\text{.}
$$
\end{itemize}
Therefore, in each dimension, there is only a finite number of families of polytopal Hilbert
geometries which may have an asymptotic volumes less than the simplex's.  
\end{theo}

This theorem is therefore weaker in its asymptotic results from the one
we expect, but in the meantime it gives the existence of an optimal lower bound on the volume growth
of balls which was not known. Indeed the previous result of this kind was obtained
by the author with Colbois \cite{ccv}, but it only gave a lower bound which converged
to zero as the radius of the ball went to infinity.

As a corollary we obtain a new proof of the fact that a Hilbert geometry
bi-lipschitz to a normed vector space is actually a polytopal one.

\section{Notations}A \textsl{proper} open set in $\R^n$ is a set not containing a whole line.

A Hilbert geometry
$(\mathcal{C},d_\mathcal{C})$ is a non empty \textsl{proper} open convex set $\mathcal{C}$
in $\R^n$ (that we shall call \textsl{convex domain}) with
the Hilbert distance 
$d_\mathcal{C}$ defined as follows: for any distinct points $p$ and $q$ in $\mathcal{C}$,
the line passing through $p$ and $q$ meets the boundary $\partial \mathcal{C}$ of $\mathcal{C}$
at two points $a$ and $b$, such that one walking on the line goes consecutively by $a$, $p$, $q$
$b$. Then we define
$$
d_{\mathcal C}(p,q) = \frac{1}{2} \ln [a,p,q,b],
$$
where $[a,p,q,b]$ is the cross ratio of $(a,p,q,b)$, i.e., 
$$
[a,p,q,b] = \frac{\| q-a \|}{\| p-a \|} \times \frac{\| p-b \|}{\| q-b\|} > 1,
$$
with $\| \cdot \|$ the canonical euclidean norm in
$\mathbb R^n$.  If either $a$ or $b$ is at infinity the corresponding ratio will be taken equal to $1$.

Note that the invariance of the cross ratio by a projective map implies the invariance 
of $d_{\mathcal C}$ by such a map.

These geometries are naturally endowed with
a  $C^0$ Finsler metric $F_\mathcal{C}$ as follows: 
if $p \in \mathcal C$ and $v \in T_{p}\mathcal C =\R^n$
with $v \neq 0$, the straight line passing by $p$ and directed by 
$v$ meets $\partial \mathcal C$ at two points $p_{\mathcal C}^{+}$ and
$p_{\mathcal C}^{-}$~. Then let $t^+$ and $t^-$ be two positive numbers such
that $p+t^+v=p_{\mathcal C}^{+}$ and $p-t^-v=p_{\mathcal C}^{-}$, in other words
these numbers corresponds to the time necessary to reach the boundary starting
at $p$ with the speed $v$ and $-v$. Then we define
$$
F_{\mathcal C}(p,v) = \frac{1}{2} \biggl(\frac{1}{t^+} + \frac{1}{t^-}\biggr) \quad \textrm{and} \quad F_{\mathcal C}(p , 0) = 0.
$$ 
Should $p_{\mathcal C}^{+}$ or
$p_{\mathcal C}^{-}$ be at infinity, then corresponding ratio will be taken equal to $0$.


The Hilbert distance $d_\mathcal{C}$ is the length distance associated to 
$F_{\mathcal C}$. We shall denote by $B_\mathcal{C}(p,r)$ the metric ball of radius $r$
centred at the point $p\in \mathcal{C}$.

Thanks to that Finsler metric, we can built two important Borel measures
$\mathcal C$. 

The first one is called the \textsl{Busemann} volume, will be denoted by $\vol_{\mathcal C}$
(It is actually the Hausdorff measure associated to the metric space $(\mathcal C,
d_{\mathcal C})$, see \cite{bbi}, example~5.5.13),  and is
defined as follows.
To any $p \in \mathcal C$, let $\beta_{\mathcal C}(p) = \{v \in \R^n ~|~ F_{\mathcal{C}}(p,v) < 1 \}$
be the open unit ball in
$T_{p}\mathcal{C} = \R^n$ of the norm $F_{\mathcal{C}}(p,\cdot)$ and 
$\omega_{n}$ the euclidean volume of the open unit ball of the standard euclidean space
$\R^n$.
Consider the (density) function $h_{\mathcal C}\colon  \mathcal C \longrightarrow \R$ given by $h_{\mathcal C}(p)
= \omega_{n}/\text{Leb}\bigl(\beta_{\mathcal C}(p)\bigr),$ where $\text{Leb}$ is the canonical Lebesgue measure
of $\R^n$ equal to $1$ on the unit "hypercube". 

$$
\vol_{\mathcal C}(A) = \int_{A} h_{\mathcal C}(p) \ed{\text{Leb}(p)}
$$
for any Borel set $A$ of $\mathcal C$.

The second one, called the \textsl{Holmes-Thompson} volume will be denoted
by $\mu_{HT,\mathcal{C}}$, and is defined as follows. Let $\beta_{\mathcal C}^*(p)$ be the polar
dual of $\beta_{\mathcal C}(p)$ and $h_{HT,\mathcal C}\colon\mathcal C \longrightarrow \R$  the density defined
by $h_{HT, \mathcal C}(p)
=\text{Leb}\bigl(\beta_{\mathcal C}^*(p)\bigr)/\omega_n,$. Then $\mu_{HT,\mathcal{C}}$ is the measure associated
to that density.

We can actually consider a wider family of measure as follows
Let ${\mathcal E}_n$ be the set of pointed properly open convex sets in $\R^n$. These are the pairs $(\omega,x)$, such that
$\omega$ is  a properly open convex set and $x$ a point inside $\omega$. 
We shall say that a function $f\colon {\mathcal E}_n\to \R^+\setminus\{0\}$
is a \textsl{proper density} if it is 
\begin{description}
\item[Continuous] with respect to the Hausdorff pointed topology on ${\mathcal E}_n$;
\item[Monotone decreasing] with respect to inclusion of the convex sets, i.e., if $x\in \omega \subset \Omega$
then $f(\Omega,x)\leq f(\omega,x)$.
\item[Chain rule compatible] if for any projective transformation $T$ one has
$$
f\bigl(T(\omega),T(x)\bigr) \text{Jac}(T)= f(\omega,x)\text{.} 
$$
\end{description}
We will say that $f$ is a \textsl{normalised proper density} if in addition
$f$ coincides with the standard Riemannian volume on the Hyperbolic geometry of ellipsoids.
Let us denote by $PD_n$ the set of proper densities over ${\mathcal E}_n$. 

Let us now recall a result of Benzecri~\cite{benzecri} which states that the action of the group of
projective transformations on ${\mathcal E}_n$ is co-compact. Then, as remarked by L.~Marquis,
for any pair $f,g$ of proper densities, there exists a constant $C>0$ ($C\geq 1$ for the normalised ones)
such that
that for any $(\omega,x)\in {\mathcal E}$ one has
\begin{equation}
  \label{eqmeasures}
  \frac1C \leq \frac{f(\omega,x)}{g(\omega,x)}\leq C\text{.}
\end{equation}

In the same way we defined the Busemann and the Holmes-Thompson volumes, 
to any proper density $f$ one can associate a Borel measure on $\mathcal C$
$\mu_{f,\mathcal C}$. Integrating the equivalence (\ref{eqmeasures}) we obtain that  for any pair $f,g$
of densities, there exists a constant $C>0$ such that for any Borel set $U\subset  \mathcal C$ we will have
\begin{equation}
  \label{eqmeasures2}
  \frac1C \mu_{g,\mathcal C }(U)\leq \mu_{f,\mathcal C}(U)\leq C \mu_{g,\mathcal C }(U)\text{.} 
\end{equation}
We shall call \textsl{proper measures with density} the family of
measures obtain this way.

To a proper density $f\in PD_{n-1}$ we can also associate a $n-1$-dimensional measure,
denoted by $\text{Area}_{f,\mathcal C}$,
on hypersurfaces in $\mathcal C$ as follows. Let $S_{n-1}$ be smooth a hypersurface,
and consider for a point $p$ in the hypersurface $S_{n-1}$ its tangent hyperplane $H(p)$,
then the measure will be given by 
\begin{equation}
  \label{eqarea}
  d\text{Area}_{f,{\mathcal C}}(p) = d\mu_{f,{\mathcal C }\cap H(p)}(p)\text{.}
\end{equation}

Let now $\mu_{f,{\mathcal C}}$ be  a proper measure with density over $\mathcal C$,
then the volume entropy of $\mathcal C$ is defined by
\begin{equation}
  \label{eqvolentropie}
  \ent(\mathcal C)=\liminf_{r\to +\infty} \dfrac{\ln \mu_{f,{\mathcal C}}\bigl(B_{\mathcal C}(p,r)\bigr)}{r} \text{.}
\end{equation}
This number does not depend on either $f$ or $p$.

\section{Lower bound}

\begin{theo}\label{lowerbound}
The volume growth of  balls and spheres in a Hilbert Geometry is at least polynomial.
More precisely  for any integer $n\in \N^*$ and any proper density $f\in PD_n$ (resp. $g\in PD_{n-1}$)
there exists  a constant $c_B(n,f)$ (resp. $c_S(n,g)$) 
such that given a $n$-dimensional Hilbert Geometry $(\mathcal{C},d_\mathcal{C})$ and a
point $x\in \mathcal{C}$, for any $r\in \R^+$ one has the following inequalities
\begin{eqnarray*}
c_B(n,f)r^n&\leq& \mu_{f,_\mathcal{C}}\bigl(B_\mathcal{C}(x,r)\bigr)\\
c_S(n,g)r^{n-1}&\leq & Area_{g,\mathcal{C}}\bigl(S_\mathcal{C}(x,r)\bigr)\text{.}
\end{eqnarray*}
\end{theo}

When considering the Busemann volume we will drop the $f$ or $g$ in the constants, i.e.,
we shall just denote the constants appearing Theorem \ref{lowerbound} by $c_B(n)$ and $c_S(n)$.
The constant associated to the Holmes-Thompson metric will be denoted by $c_B^*(n)$ and $c_S^*(n)$.

\begin{proof}
According to the inequality (\ref{eqmeasures2}) there exists a constant 
$C_1(n)$ such that one has the following comparison between
the Holmes\--Thompson and the Busemann measures: for any Borel set $U$ in $\mathcal{C}$,
\begin{equation}\label{equivmeasures}
  C_1^{-1}(n)\mu_{HT,\mathcal{C}}(U)\leq \vol_\mathcal{C}\bigl(U)\leq C_1(n)\mu_{HT,\mathcal{C}}(U)\text.
\end{equation}
hence our results will be true for either of these measures (and actually for
any proper density). 

Remark  that without loss of generality we can restrict to strictly convex
and $C^2$ convex sets, as the results passes to the limit with respect to the Hausdorff pointed topology.

Now let us do the proof by induction on the dimension for both measures at the same time.
First notice that the $1$-dimensional Hilbert Geometry is isometric to $\R$ thus, 
$c_B(1)=2=c_B^*(1)$ and we have actually an equality
for both measures.

Now suppose the result is true in dimension $n$ and let us prove that it holds in dimension $n+1$.
We need to consider a point $x\in \mathcal{C}\subset\R^{n+1}$ and the ball of radius $r$ centred at $x$.
Take a hyperplane $H$ intersecting the convex set $\mathcal{C}$ and containing $x$, 
by induction we thus have for any $s\in \R^+$
$$
c_B^*(n)s^n\leq\mu_{HT,\mathcal{C}\cap H}(B_{\mathcal{C}\cap H}(x,s))
$$
and we remark that $\mathcal{C}\cap H$ is totally geodesic, thus thanks to a Crofton formula valid in this setting (see \cite{alvarez_fernandez} theorem 1.1 and remark 2) or by minimality of totally geodesic submanifolds with respect 
to the  Holmes-Thompson measure (see \cite{alvarez_berck,berck}) we obtain that
the Holmes-Thompson area of the half spheres of radius $s$ centred at $x$ defined by $H$ have an area bigger or equal
to $B_{\mathcal{C}\cap H}(x,s)$, hence
$$
2c_B^*(n)s^n\leq\text{Area}_{HT,\mathcal{C}}\bigl(S_\mathcal{C}(x,s)\bigr)
$$
Which implies the result for the spheres.
Now thanks to the co-area inequality obtained in \cite{berck_Bernig_Vernicos} (lemmata 2.12 and 2.13)
we have the existence
of a constant $C_2(n)$ such that
\begin{equation}
  \label{eqcoarea}
  2c_B^*(n)s^n\leq\text{Area}_{HT}\bigl(S_\mathcal{C}(x,s)\bigr)\leq C_2(n)\dfrac{\partial}{\partial s} \vol\bigl(B_\mathcal{C}(x,s)\bigr)
\end{equation}
Hence it suffices to integrate the inequalities (\ref{eqcoarea}) between $0$ and $r$ to obtain the desired result for the Busemann measure and thanks to the comparison (\ref{equivmeasures}) for the Holmes-Thompson measure.
\end{proof}

The previous proof also implies the following proposition
related to the volume entropy.

\begin{prop}
  The volume entropy of a Hilbert geometry is bigger or equal to any of
its lower dimensional sections, \textsl{i.e.}, let $(\mathcal{C},d_{\mathcal{C}})$
be a $n$-dimensional Hilbert geometry and let $A_k$ be an affine $k$-dimensional
subspace of $\R^n$, then we have
$$
\ent(A_k\cap\mathcal{C})\leq\ent(\mathcal{C})\text{.}
$$
\end{prop}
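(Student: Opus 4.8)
The plan is to reduce the statement to the codimension-one case and then to reuse, essentially word for word, the two estimates already exploited in the proof of Theorem~\ref{lowerbound}: the minimality of totally geodesic hypersurfaces for the Holmes--Thompson area, and the co-area inequality of \cite{berck_Bernig_Vernicos}. Since the entropy depends neither on the basepoint nor on the chosen proper density, I would fix a point $x\in A_k\cap\mathcal C$ (which we assume non-empty) and a chain of affine subspaces $A_k\subset A_{k+1}\subset\dots\subset A_n=\R^n$, each of codimension one in the next and all containing $x$. Each intersection $A_j\cap\mathcal C$ is a proper open convex subset of $A_j\cong\R^j$, hence a Hilbert geometry, and $A_j\cap\mathcal C$ is totally geodesic in $A_{j+1}\cap\mathcal C$. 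It therefore suffices to treat the codimension-one case, applied iteratively along the chain (each $A_j\cap\mathcal C$ being a hyperplane section of the Hilbert geometry $A_{j+1}\cap\mathcal C$), so we must only prove $\ent(H\cap\mathcal C)\le\ent(\mathcal C)$ whenever $H$ is a hyperplane meeting $\mathcal C$.

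For that step I would argue exactly as in the proof of Theorem~\ref{lowerbound}: work with the Holmes--Thompson measure and, after the usual reduction, assume $\mathcal C$ strictly convex with $C^2$ boundary, so that the metric spheres $S_{\mathcal C}(x,s)$ are $C^1$ hypersurfaces. By minimality of totally geodesic submanifolds for the Holmes--Thompson area (or by the Crofton formula of \cite{alvarez_fernandez}), each of the two half-spheres cut out of $S_{\mathcal C}(x,s)$ by $H$ has area at least $\mu_{HT,\mathcal C\cap H}\bigl(B_{\mathcal C\cap H}(x,s)\bigr)$; since a half-sphere sits inside the whole sphere, the co-area inequality of \cite{berck_Bernig_Vernicos} then provides a constant $C_2(n)$ with
$$
\mu_{HT,\mathcal C\cap H}\bigl(B_{\mathcal C\cap H}(x,s)\bigr)\le \text{Area}_{HT}\bigl(S_{\mathcal C}(x,s)\bigr)\le C_2(n)\,\frac{\partial}{\partial s}\vol_{\mathcal C}\bigl(B_{\mathcal C}(x,s)\bigr)\text{.}
$$
The leftmost term is non-decreasing in $s$, so integrating over $[r-1,r]$ yields $\mu_{HT,\mathcal C\cap H}\bigl(B_{\mathcal C\cap H}(x,r-1)\bigr)\le C_2(n)\,\vol_{\mathcal C}\bigl(B_{\mathcal C}(x,r)\bigr)$ for every $r\ge 1$.

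It then remains to take logarithms, divide by $r$, and pass to the $\liminf$. The factor $C_2(n)$ disappears because $\ln C_2(n)/r\to 0$; writing this quotient as $\frac{\ln\mu_{HT,\mathcal C\cap H}(B_{\mathcal C\cap H}(x,r-1))}{r-1}\cdot\frac{r-1}{r}$ and noting that, by Theorem~\ref{lowerbound}, this logarithm is eventually non-negative, its $\liminf$ equals $\ent(\mathcal C\cap H)$, while the right-hand side has $\liminf$ equal to $\ent(\mathcal C)$ (and by the comparison (\ref{equivmeasures}) replacing $\vol_{\mathcal C}$ by any proper measure only changes things by a bounded factor). Hence $\ent(\mathcal C\cap H)\le\ent(\mathcal C)$, and chaining this over $j=k,\dots,n-1$ gives the proposition. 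The only genuine difficulty here is the one already present in Theorem~\ref{lowerbound}, namely the two geometric inputs — minimality (Crofton) for the Holmes--Thompson area of the metric spheres, and the co-area inequality — together with checking that the reduction to $C^2$ strictly convex domains is harmless for the $\liminf$ defining the entropy; granting those, the remaining manipulations are routine.
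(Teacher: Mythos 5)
Your proof is correct and follows essentially the same route as the paper: reduce to a hyperplane section and use minimality of totally geodesic submanifolds for the Holmes--Thompson area to bound the ball of $H\cap\mathcal{C}$ by the sphere of $\mathcal{C}$. The only (harmless) difference is at the very end, where the paper simply cites the equality of the spherical and ball volume entropies from \cite{berck_Bernig_Vernicos}, while you re-derive the needed comparison by integrating the co-area inequality over a unit interval.
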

\begin{proof}
 We just do the proof for $k=n-1$, the general result easily follows.
Let  $H$ be a hyperplane such that $H\cap \mathcal{C}$ is an open $(n-1)$-dimensional
convex set and  $p$ be a point inside $H\cap \mathcal{C}$.
Then by minimality, as in the previous proof one has
$$
2\mu_{HT, H\cap \mathcal{C}}\bigl(B_{H\cap \mathcal{C}}(p,R)\bigr)\leq \text{Area}_{HT,\mathcal{C}}\bigl(S_\mathcal{C}(p,R)\bigr)\text{.}
$$ 
Now taking the logarithm of both sides, dividing by $R$ and taking the limit as
$R$ goes to infinity proves that $\ent(H\cap \mathcal{C})$ is lower
than the spherical volume entropy of $\mathcal{C}$, which is equal to the
volume entropy of $\mathcal{C}$ following \cite{berck_Bernig_Vernicos}.
\end{proof}

Let us now focus on the Busemann volume and define the $n$-asymptotic volume by
\begin{equation}
  \underline{{\rm Asvol}_n}(\mathcal{C},x)=\liminf_{r\to +\infty} \dfrac{\vol_\mathcal{C}\bigl(B_\mathcal{C}(x,r)\bigr)}{r^n}
\end{equation}

\begin{conj}
Let $b_n$ be the asymptotic volume of the simplex (which equals the euclidean volume of the unit ball
if the volume is the Busemann volume) then we have

\begin{enumerate}
\item $ \underline{{\rm Asvol}_n}(\mathcal{C})\geq b_n$;
\item with equality if and only if $(\mathcal{C},d_\mathcal{C})$ is a simplex.
\end{enumerate}
\end{conj}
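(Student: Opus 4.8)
The plan is to reduce to polytopes and then to localise the volume of large balls at the vertices, which turns the problem into an induction on the dimension carried by a ``pyramid'' formula over the facets.

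If $\mathcal C$ is not a polytope, then by the second theorem of the Introduction (the ``finite asymptotic volume $\iff$ polytope'' statement, established in the paper) its asymptotic volume is infinite, so $(1)$ is trivial and the equality case cannot occur; hence we may assume $\mathcal C=P$ is a convex polytope in $\R^n$, necessarily full--dimensional, so with at least $n+1$ facets and exactly $n+1$ if and only if $P$ is a simplex. The induction starts at $n=1$: a proper open convex subset of $\R$ is an interval, isometric to $(\R,|\cdot|)$, with $\vol_\mathcal{C}\bigl(B_\mathcal{C}(x,r)\bigr)=2r$, so $\underline{{\rm Asvol}_1}(\mathcal C)=2=b_1$, with equality always, every one--dimensional convex domain being a ``$1$--simplex''.

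For the inductive step one first proves a \emph{localisation lemma}: if $U_v$ are disjoint Euclidean neighbourhoods of the vertices $v$ of $P$, small enough that $P\cap U_v=C_v\cap U_v$ for the tangent cone $C_v$ of $P$ at $v$, then
\[
\vol_P\bigl(B_P(x,r)\bigr)=\sum_{v}\vol_P\bigl(B_P(x,r)\cap U_v\bigr)+o(r^n),
\]
because near the relative interior of a face of positive dimension the convex set degenerates towards a half--space and the Busemann density there is integrable enough that those regions contribute only $o(r^n)$ (in fact $O(r^{n-1})$). The heart of the matter is the vertex contribution $a(C_v):=\lim_{r\to\infty}r^{-n}\vol_P\bigl(B_P(x,r)\cap U_v\bigr)$. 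Since $C_v$ is invariant under the homotheties centred at $v$, its Busemann density satisfies $h_{C_v}(v+\delta\eta)=\delta^{-n}\bar h_v(\eta)$ for a density $\bar h_v$ on the spherical link $\Sigma_v$ of $C_v$, and one has $d_P\bigl(x,v+\delta\eta\bigr)=\tfrac12\ln(1/\delta)-\tfrac12\ln m_v(\eta)+O(1)$ with $m_v$ a piecewise--linear ``exit ratio'' on $\Sigma_v$ vanishing exactly on $\partial\Sigma_v$. Substituting in polar coordinates, $a(C_v)$ becomes an explicit integral over $\Sigma_v$ of $\bar h_v$ against powers of $\ln(1/m_v)$, whose growth in $r$ is produced entirely by the non--integrability of $\bar h_v$ near $\partial\Sigma_v$ --- and $\partial\Sigma_v$ is the link of the facets of $C_v$, which are $(n-1)$--dimensional cones. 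Carrying out the computation should give
\[
a(C_v)=\frac{\omega_n}{(n+1)\,\omega_{n-1}}\sum_{F}a(F),
\]
the sum running over the facets $F$ of $C_v$ and $a(F)$ being the analogous lower--dimensional contribution of the cone $F$; reassembling over the vertices and facets of $P$, and using the $(n-1)$--dimensional localisation lemma to recognise $\sum_{v\in F}a(F_v)=\underline{{\rm Asvol}_{n-1}}(F)$, this produces the pyramid identity
\[
\underline{{\rm Asvol}_n}(P)=\frac{\omega_n}{(n+1)\,\omega_{n-1}}\sum_{F\ \text{facet of}\ P}\underline{{\rm Asvol}_{n-1}}(F).
\]

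Granting this identity the conjecture closes by induction: by the induction hypothesis each facet satisfies $\underline{{\rm Asvol}_{n-1}}(F)\ge b_{n-1}=\omega_{n-1}$, and as $P$ has at least $n+1$ facets,
\[
\underline{{\rm Asvol}_n}(P)\ \ge\ \frac{\omega_n}{(n+1)\,\omega_{n-1}}\,(n+1)\,\omega_{n-1}\ =\ b_n,
\]
which is $(1)$; equality forces $P$ to have exactly $n+1$ facets, hence to be a simplex, and conversely the Hilbert geometry of the $n$--simplex is isometric to a normed vector space, so there $\vol_\mathcal{C}\bigl(B_\mathcal{C}(x,r)\bigr)=b_nr^n$ identically, giving $(2)$.

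The main obstacle is the sharp pyramid identity, and in particular its \emph{exact} constant $\omega_n/((n+1)\omega_{n-1})$: the coarse bound $a(C_v)\ge c_n>0$ is essentially what Theorem~\ref{lowerbound} (and the $c_n k$ estimate of the Introduction) already yields by this localisation, but extracting the optimal coefficient, together with the rigidity that equality characterises simplicial cones, demands a precise control of how each stratum of $\partial\Sigma_v$ --- i.e.\ each lower--dimensional face of $C_v$ --- contributes to the link integral, in particular of the interplay between the singularity of $\bar h_v$ and the vanishing of $m_v$ there, propagated through the dimensional induction. A secondary difficulty is to keep the $o(r^n)$ error of the localisation lemma, as well as the overlaps between neighbourhoods of lower--dimensional faces, from interfering with the additivity needed in the equality case, which again rests on the degeneracy of the Busemann density along positive--dimensional faces.
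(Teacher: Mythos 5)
The statement you are addressing is not proved in the paper: it is stated there explicitly as a \emph{Conjecture}, with no argument offered, so there is no ``paper's own proof'' to compare against and your text has to stand on its own. As it stands it is a programme rather than a proof. The parts that do work are exactly the parts the paper already supplies: the reduction to polytopes is legitimate, since the proof of Proposition~\ref{finite_upperasvol} (inequality~(\ref{ineqlowerasvol})) shows that $\underline{{\rm Asvol}_n}(\mathcal C)$ is infinite when $\mathcal C$ is not a polytope; the base case $n=1$ and the value $b_n=\omega_n$ for the simplex (via de la Harpe's isometry of the simplex with a normed space) are correct.

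Everything beyond that rests on two unproved assertions. The localisation lemma (positive--dimensional faces contribute $o(r^n)$) is plausible \EMdash{} it can be checked by hand for the simplex using the normed-space model \EMdash{} but is not established for a general polytope, where no isometric model is available and bi-Lipschitz comparisons do not control the leading coefficient. Far more seriously, the ``pyramid identity'' with its exact constant $\omega_n/\bigl((n+1)\omega_{n-1}\bigr)$ is introduced with ``carrying out the computation should give'', and you then name it yourself as the main obstacle. That identity \emph{is} the conjecture: it contains both the sharp lower bound and the rigidity (equality only for $n+1$ facets). Nothing in the paper comes close to it \EMdash{} Theorem~\ref{lowerbound} and the vertex-counting argument of Proposition~\ref{finite_upperasvol} only yield the coarse bound $c_B(n)k/4^n$, with a constant that is not sharp and cannot detect the equality case. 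To be fair, the overall shape of your strategy (volume concentrating at the vertex cones, an inductive formula over facets, equivalently an asymptotic volume proportional to a flag count minimised by the simplex) is a sensible and promising route; but as written the argument assumes its hardest step, so there is a genuine gap at its centre.
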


In a previous paper we studied the volume entropy of Hilbert geometries \cite{berck_Bernig_Vernicos}.
In the present paper we are focusing on Hilbert geometries for which the entropy is equal to zero.

In that case one can focus on the polytopal entropy defined by
 \begin{equation}
   \label{eqpolyentropy}
   {\rm PolEnt}(\mathcal{C})=\liminf_{r\to +\infty}\dfrac{\ln\Bigl(\vol_\mathcal{C}\bigl(B_\mathcal{C}(x,r)\bigr)\Bigr)}{\ln r}\text{.}
 \end{equation}
 This number can be defined for any proper measure with density, and does not depend
 on the proper density nor on the centre $x$.

\section{Upper Bound}

In this section we will consider the Busemann volume and denote once more by $c_B(n)$ the constant
given by Theorem \ref{lowerbound}.

Let us define the upper $n$-asymptotic volume by
\begin{equation}
  \overline{{\rm Asvol}_n}(\mathcal{C},x)=\limsup_{r\to +\infty} \dfrac{\vol_\mathcal{C}\bigl(B_\mathcal{C}(x,r)\bigr)}{r^n}
\end{equation}

\begin{prop}\label{finite_upperasvol}
  Let $(\mathcal{C},d_\mathcal{C})$ be an $n$-dimensional Hilbert Geometry,
the upper $n$-asymptotic volume is finite, if and only if $\mathcal{C}$ is a polytope, \textsl{i.e.},
  \begin{equation*}
\overline{{\rm Asvol}_n}(\mathcal{C})=\limsup_{r\to +\infty} \dfrac{\vol_\mathcal{C}\bigl(B_\mathcal{C}(x,r)\bigr)}{r^n} < +\infty \iff \mathcal{C} \text{ is a polytope}
  \end{equation*}
\end{prop}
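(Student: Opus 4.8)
The plan is to prove the two implications separately; $(\Leftarrow)$ is easy and the content lies in $(\Rightarrow)$. For $(\Leftarrow)$, recall that a polytopal Hilbert geometry is bi-Lipschitz to that of the simplex (\cite{bernig,ver08,cvv3}), hence to a finite dimensional normed vector space $(\R^n,\|\cdot\|)$. In a normed space the metric ball of radius $r$ is the $r$-dilate of the unit ball, so its volume is $r^n$ times a constant; and a bi-Lipschitz homeomorphism changes the Hilbert distance, as well as the Busemann volume (which is the $n$-dimensional Hausdorff measure of the metric space), only by bounded multiplicative factors. Hence $\vol_{\mathcal{C}}\bigl(B_{\mathcal{C}}(x,r)\bigr)\le C\,r^n$ when $\mathcal{C}$ is a polytope, so $\overline{{\rm Asvol}_n}(\mathcal{C})<+\infty$.

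For $(\Rightarrow)$ I argue by contraposition: assuming $\mathcal{C}$ is not a polytope I exhibit, for every $N$, a radius $R_N$ with $\vol_{\mathcal{C}}\bigl(B_{\mathcal{C}}(x,R)\bigr)\ge N R^n$ for all $R\ge R_N$, which forces $\overline{{\rm Asvol}_n}(\mathcal{C})=+\infty$. After a projective normalisation we may assume $\mathcal{C}$ is bounded. A bounded convex body that is not a polytope has infinitely many extreme points, hence, by Straszewicz's theorem, infinitely many \emph{exposed} points; as $\overline{\mathcal{C}}$ is compact, we select among them a sequence of pairwise distinct points $\xi_1,\xi_2,\dots\to\xi\in\partial\mathcal{C}$ and, for each $m$, a radius $s_m>0$ so small that the balls $B(\xi_m,s_m)$ are pairwise disjoint, after an affine normalisation putting $\xi_m=0$, making $\{x_n=0\}$ a supporting hyperplane that exposes $\xi_m$, and arranging $\mathcal{C}\cap B(0,s_m)\subset\{x_n>0\}$.

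The crux is a local lower bound with a \emph{universal} constant: there are $c(n)>0$ and radii $R_m$ such that $\vol_{\mathcal{C}}\bigl(B_{\mathcal{C}}(x,R)\cap B(\xi_m,s_m)\bigr)\ge c(n)\,R^n$ for all $R\ge R_m$. Its proof rests on two facts valid near the boundary: the Busemann density is comparable to the inverse volume of the Macbeath region, $h_{\mathcal{C}}(p)\asymp_n\mathrm{Leb}\bigl(M_{\mathcal{C}}(p)\bigr)^{-1}$ with $M_{\mathcal{C}}(p)=\{q\in\mathcal{C}:2p-q\in\mathcal{C}\}$, because the gauge of $M_{\mathcal{C}}(p)$ lies within a factor $2$ of $F_{\mathcal{C}}(p,\cdot)$; and $d_{\mathcal{C}}(x,p)=\tfrac12\log\bigl(1/\tau(p)\bigr)+O_m(1)$, where $\tau(p)$ is the relative distance of $p$ to $\partial\mathcal{C}$ along the segment $[x,p]$. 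These two facts turn $\vol_{\mathcal{C}}\bigl(B_{\mathcal{C}}(x,R)\cap B(\xi_m,s_m)\bigr)$ into, up to universal factors, the Busemann volume of a metric ball of the truncated tangent cone $T_m=T_{\xi_m}\mathcal{C}$ — here one uses that near $\xi_m$ the body $\mathcal{C}$ is sandwiched between $T_m$ and a comparable shrunken subcone. If $T_m$ is salient it is a proper open convex cone; centring that ball deep enough near the apex that it lies entirely at Euclidean scale $<s_m$ and applying Theorem~\ref{lowerbound} to $T_m$ yields the bound $c_B(n)R^n$. If $T_m$ is a half-space then $\xi_m$ is a smooth exposed point, and one replaces the appeal to Theorem~\ref{lowerbound} by the direct computation for a strictly convex cap, which in fact gives a super-polynomial — a fortiori $\ge c(n)R^n$ — contribution, driven by the points that approach $\partial\mathcal{C}$ almost tangentially near the rim of the cap.

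Granting the local estimate, disjointness of the $B(\xi_m,s_m)$ gives $\vol_{\mathcal{C}}\bigl(B_{\mathcal{C}}(x,R)\bigr)\ge c(n)\,R^n\cdot\#\{m:R_m\le R\}$; since there are infinitely many $m$ and each $R_m$ is finite, $\#\{m:R_m\le R\}\to+\infty$, and taking $R_N$ so that this count exceeds $N/c(n)$ completes the proof. I expect the main obstacle to be exactly this local estimate, and within it two points: making the density–Macbeath comparison and the logarithmic control of $d_{\mathcal{C}}$ precise enough that the Jacobian of the cone's facet coordinates cancels, so that $c(n)$ does not depend on $m$; and treating the non-salient tangent cones, where Theorem~\ref{lowerbound} is not available directly (a half-space is not proper) and one needs the explicit cap computation that already governs the parabolic and higher-order contact cases. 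Note that the reduction to strictly convex $C^2$ bodies used for the lower bound is of no use here, since those are precisely the bodies the statement must show have an infinite asymptotic volume.
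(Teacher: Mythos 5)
Your $(\Leftarrow)$ direction is fine: it amounts to the bound $\vol_{\mathcal{C}}\bigl(B_{\mathcal{C}}(x,r)\bigr)\le C r^n$ for polytopes, which the paper simply quotes from \cite{ver09}; deriving it instead from the bi-Lipschitz equivalence with the simplex and the bi-Lipschitz quasi-invariance of Hausdorff measure is a legitimate variant. The problem is the $(\Rightarrow)$ direction. Everything there hinges on the local estimate $\vol_{\mathcal{C}}\bigl(B_{\mathcal{C}}(x,R)\cap B(\xi_m,s_m)\bigr)\ge c(n)R^n$ with a constant independent of $m$ (hence of the shrinking scales $s_m$), and this estimate is not proved; the sketch you give of it does not go through as stated. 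The claim that near $\xi_m$ the body is ``sandwiched between $T_m$ and a comparable shrunken subcone'' in a way that makes the Busemann density and the metric balls of $\mathcal{C}$ comparable to those of the cone is unjustified: the tangent cone is only a first-order approximation, and at a point of parabolic (or higher-order) contact the local Hilbert geometry of $\mathcal{C}$ is nothing like that of any cone at any fixed scale --- the ball of the body near such a point is Gromov-hyperbolic-like while the cone's is quasi-isometric to a normed space. Worse, for a smooth strictly convex body \emph{every} boundary point is exposed with half-space tangent cone, so the case you defer to an unwritten ``direct computation for a strictly convex cap'' is not a side case but the generic one (and exposedness of $\xi_m$ does not make the cap strictly convex, so even that computation would not cover all non-salient cases). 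In short, the crux of your proof is missing, and the route you chose to it is considerably harder than the problem requires.

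The paper's argument avoids all local analysis at the boundary. By Krein--Milman a non-polytope has, for every $k$, a set $X_k$ of $k$ extremal points; for each $p\in X_k$ let $x_p(3R/4)$ be the point at distance $3R/4$ from $x$ on the ray from $x$ towards $p$, and set $B_{p,R}=B_{\mathcal{C}}\bigl(x_p(3R/4),R/4\bigr)$. This is a \emph{metric} ball of the ambient Hilbert geometry, so Theorem~\ref{lowerbound} applies to it verbatim and uniformly, giving $\vol_{\mathcal{C}}(B_{p,R})\ge c_B(n)(R/4)^n$ with no boundary analysis whatsoever. These balls lie in $B_{\mathcal{C}}(x,R)$ by the triangle inequality, and for $R$ large they are pairwise disjoint because $d_{\mathcal{C}}\bigl(x_p(R'),x_q(R')\bigr)$ is asymptotic to $2R'$ for distinct extremal points $p,q$ (an elementary cross-ratio computation using the tangent lines at $p$ and $q$ in the plane through $x$, $p$, $q$). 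Summing over $X_k$ gives $\underline{{\rm Asvol}_n}(\mathcal{C})\ge k\,c_B(n)/4^n$ for every $k$, whence the conclusion. If you want to keep your architecture, replace your local density estimate by exactly this observation: the $R^n$ of volume you are trying to extract near each extreme point is already supplied by Theorem~\ref{lowerbound} applied to a metric ball of $\mathcal{C}$ itself pushed towards that point.
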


\begin{rmq}
  The results obtained in Vernicos~\cite{ver12} give for any $n\in \N$ the existence
  of convex sets in $\R^2$ with polytopal volume growth, such that $n+2\leq{\rm PolEnt}(\mathcal{C})\leq n+3$
which are therefore not polytopes. 
\end{rmq}

\begin{proof}[Proof of proposition \ref{finite_upperasvol}]
  In \cite{ver09} we proved that in a polytope of $\R^n$, the volume of balls of radius $r$ was less than a constant times $r^n$.
Once again the co-area inequality obtained in \cite{berck_Bernig_Vernicos} implies that the volume of spheres of radius $r$
is also bounded by a constant times $r^{n-1}$, otherwise we would get a contradiction.

Reciprocally, let us suppose that $\mathcal{C}$ is not a polytope. Hence by Krein-Millman's theorem
for any $k\in\N^*$ there exists a subset $X_k$ with $k$ extremal points of $\partial\mathcal{C}$. 

Then let us fix some $k$ and a corresponding subset $X_k$ of the boundary. Then for any given pair of points $(p,q)$ in $X_k$,
there exists an $R_{p,q}>0$ such that for any $R>R_{p,q}$, if $x_p(R)$ and $x_q(R)$ are the intersections
of the lines $(xp)$ and $(xq)$ with the sphere of radius $R$  centred at $x$, \textit{i.e.}, $S_\mathcal{C}(x,R)$ ,
then
$$
d_\mathcal{C}\bigl(x_p(R),x_q(R)\bigr)>R.
$$
(An easy computation, using the tangents at $p$ and $q$ to $\mathcal{C}$ in the plane $(opq)$
 shows that it is bounded from below by a quantity which is equivalent to $2R$ as $R$ goes to infinity).

Let us consider $R_k>\max\{ R_{p,q}\mid p,q \in X_k\}$ 

For any extremal point $p\in X_K$ let us consider the ball $B_{p,R}$ of radius $R/4$ centred at $x_p(3R/4)$.
Then for any radius $R>R_k$, and any pair of points $p,q\in X_k$, the corresponding balls $B_{p,R}$ and $B_{q,R}$
are disjoints. Thus
\begin{equation}
  \label{equnionball}
  \coprod_{p\in X_k}B_{p,R} \subset B_\mathcal{C}(x,R)\text{.}
\end{equation}

Hence for any $R>R_k$ the sum of the volume of the balls $B_{p,R}$, for $p\in X_k$,  
is smaller than the volume of the ball of radius $R$ centred at $x$, i.e., 
\begin{equation} 
\label{inequnionball} 
  \sum_{p\in X_k} \vol_\mathcal{C}\bigl(B_{p,R}\bigr)\leq \vol_\mathcal{C}\bigl(B_\mathcal{C}(x,R)\bigr) \text{.}
\end{equation}

We now apply the lower bound on the volume of the balls of radius $R/4$ obtained in theorem \ref{lowerbound}
  to the inequality (\ref{inequnionball}) to obtain a lower bound in terms of $k$ and $R^n$:
 \begin{equation}
   \label{ineqlowerwithk}
   k c_B(n) (R/4)^n \leq \vol_\mathcal{C}\bigl(B_\mathcal{C}(x,R)\bigr)\text{,}
 \end{equation}
and taking the limit as $R$ goes to infinity we finally get

\begin{equation}
  \label{ineqlowerasvol}
  k\dfrac{c_B(n)}{4^n}\leq \underline{\text{Asvol}_n}(\mathcal{C})\text{.}
\end{equation}

This being true for any integer $k\in \N$ we conclude that
 $\underline{\text{Asvol}_n}(\mathcal{C})$ is infinite.
\end{proof}

During the previous proof with ended up with the equation (\ref{ineqlowerasvol})
which can be summed up in the following way.

\begin{prop}
For any integer $n\in \N$, there exists a constant $a(n)$ such that for any
 polytope $\mathcal{P}_k$ with $k$ vertices and non-empty interior in $\R^n$ one has
$$
a(n)k \leq \underline{{\rm Asvol}_n}(\mathcal{C})\text{.}
$$ 
\end{prop}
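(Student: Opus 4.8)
The plan is to observe that this proposition is essentially a restatement of the inequality (\ref{ineqlowerasvol}) already established during the proof of Proposition \ref{finite_upperasvol}, so the work is to extract a uniform constant and make the dependence on $k$ and the dimension explicit. First I would let $\mathcal{P}_k\subset\R^n$ be a polytope with $k$ vertices and non-empty interior, fix an interior point $x$, and enumerate the vertices $v_1,\dots,v_k$ of $\partial\mathcal{P}_k$. Following the construction in the previous proof, for each pair $(v_i,v_j)$ one finds $R_{i,j}>0$ such that the two points $x_{v_i}(R)$, $x_{v_j}(R)$ obtained by intersecting the rays $[x v_i)$ and $[x v_j)$ with the metric sphere $S_{\mathcal{P}_k}(x,R)$ satisfy $d_{\mathcal{P}_k}(x_{v_i}(R),x_{v_j}(R))>R$ for all $R>R_{i,j}$ (the same computation with the supporting tangent lines at $v_i,v_j$ in the plane $(x v_i v_j)$ applies verbatim, since at a vertex of a polytope the convex set is even ``sharper'' than a $C^2$ boundary point). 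Setting $R_k:=\max\{R_{i,j}: 1\le i<j\le k\}$, the balls $B_{i,R}$ of radius $R/4$ centred at $x_{v_i}(3R/4)$ are pairwise disjoint and contained in $B_{\mathcal{P}_k}(x,R)$ for every $R>R_k$.

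Next I would invoke Theorem \ref{lowerbound} (for the Busemann volume, so the relevant constant is $c_B(n)$): each of these $k$ disjoint balls has volume at least $c_B(n)(R/4)^n$, whence
\begin{equation*}
k\,\frac{c_B(n)}{4^n}\,R^n \le \vol_{\mathcal{P}_k}\bigl(B_{\mathcal{P}_k}(x,R)\bigr)\qu\text{for all }R>R_k.
\end{equation*}
Dividing by $R^n$ and letting $R\to+\infty$ gives
\begin{equation*}
a(n)\,k \le \underline{{\rm Asvol}_n}(\mathcal{P}_k),\qquad a(n):=\frac{c_B(n)}{4^n},
\end{equation*}
which is exactly the claimed bound with an explicit constant depending only on $n$. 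Since $\underline{{\rm Asvol}_n}$ does not depend on the chosen centre $x$ (and, up to a multiplicative constant absorbed into $a(n)$, not on the chosen proper density), the statement follows for every such polytope.

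The only genuine point requiring care — and the one I would expect to be the ``main obstacle,'' though it is really just a matter of bookkeeping — is checking that the threshold $R_{i,j}$ (and hence $R_k$) depends only on the \emph{geometry near the two vertices} and not on global data, so that the conclusion is uniform over all polytopes with $k$ vertices; but this is automatic because $R_k$ is allowed to depend on $\mathcal{P}_k$: we only take $R\to+\infty$ afterwards, so any finite $R_k$ suffices. A secondary subtlety is that the disjointness-of-balls argument in the prior proof was phrased for extremal points of a \emph{non-polytopal} convex set, so I would remark explicitly that vertices of a polytope are extremal points to which the identical estimate applies, the planar computation using the two supporting lines at $v_i$ and $v_j$ being if anything easier in the polytopal case. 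With those remarks in place the proof is a direct transcription of the computation culminating in (\ref{ineqlowerasvol}).
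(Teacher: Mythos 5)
Your proposal is correct and follows exactly the route the paper intends: the paper derives this proposition by simply observing that inequality (\ref{ineqlowerasvol}), established via the disjoint-balls construction around extremal points in the proof of Proposition \ref{finite_upperasvol}, applies verbatim to the $k$ vertices of a polytope, yielding $a(n)=c_B(n)/4^n$. Your explicit transcription, including the remark that vertices are extremal points to which the tangent-line estimate applies, matches the paper's argument.
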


We also get the following corollary, which is also a consequence of Colbois-Verovic \cite{cpv2}

\begin{cor}
  Let $(\mathcal{C},d_\mathcal{C})$ be a Hilbert geometry in $\R^n$ bi-lipschitz equivalent
to a $n$-dimensional vector space, then $\mathcal{C}$ is a polytope.
\end{cor}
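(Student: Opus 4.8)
The plan is to reduce the statement to Proposition~\ref{finite_upperasvol}: it suffices to show that if $(\mathcal{C},d_\mathcal{C})$ is bi-Lipschitz equivalent to an $n$-dimensional normed vector space $(V,\|\cdot\|)$, then the upper $n$-asymptotic volume $\overline{{\rm Asvol}_n}(\mathcal{C})$ is finite, for then Proposition~\ref{finite_upperasvol} yields at once that $\mathcal{C}$ is a polytope. The crucial remark is the one recalled in Section~1: the Busemann volume $\vol_\mathcal{C}$ is nothing but the $n$-dimensional Hausdorff measure of the metric space $(\mathcal{C},d_\mathcal{C})$, and $n$-dimensional Hausdorff measure is multiplied by at most $L^{n}$ under an $L$-Lipschitz map.

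Concretely, I would fix an $L$-bi-Lipschitz homeomorphism $\phi\colon(\mathcal{C},d_\mathcal{C})\to (V,\|\cdot\|)$, so that $L^{-1}d_\mathcal{C}(p,q)\le\|\phi(p)-\phi(q)\|\le L\,d_\mathcal{C}(p,q)$ for all $p,q$, and proceed in three steps. First, since $\phi$ is $L$-Lipschitz, $\phi\bigl(B_\mathcal{C}(x,r)\bigr)$ is contained in the norm ball of radius $Lr$ centred at $\phi(x)$. Second, since $\phi^{-1}$ is $L$-Lipschitz, the $n$-dimensional Hausdorff measure (for $d_\mathcal{C}$) of $B_\mathcal{C}(x,r)$ is at most $L^{n}$ times the $n$-dimensional Hausdorff measure (for $\|\cdot\|$) of $\phi\bigl(B_\mathcal{C}(x,r)\bigr)$. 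Third, on a finite-dimensional normed space the $n$-dimensional Hausdorff measure is translation invariant, hence a fixed positive multiple of Lebesgue measure, so a norm ball of radius $\rho$ has Hausdorff measure $c(V)\rho^{n}$ for a constant $c(V)$ depending only on $(V,\|\cdot\|)$. Combining these,
$$
\vol_\mathcal{C}\bigl(B_\mathcal{C}(x,r)\bigr)\le L^{n}\,c(V)\,(Lr)^{n}=L^{2n}\,c(V)\,r^{n}\qquad\text{for all }r>0,
$$
whence $\overline{{\rm Asvol}_n}(\mathcal{C})\le L^{2n}c(V)<+\infty$, and Proposition~\ref{finite_upperasvol} applies.

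I do not expect a genuine obstacle here: the only slightly delicate point is the behaviour of Hausdorff measure under a bi-Lipschitz map, but this is standard (a cover of $A$ by sets of diameter $\le\delta$ is pushed forward by an $L$-Lipschitz map to a cover of the image by sets of diameter $\le L\delta$, whence $\mathcal{H}^{n}$ of the image is $\le L^{n}\mathcal{H}^{n}(A)$). One could alternatively bypass the Hausdorff-measure identity and argue with the density function $h_\mathcal{C}$ directly after smoothing $\phi$, but that is more cumbersome and unnecessary. Note finally that the hypothesis that the target be $n$-dimensional is essentially automatic: $d_\mathcal{C}$ induces the Euclidean topology on the open set $\mathcal{C}\subset\R^{n}$, so $(\mathcal{C},d_\mathcal{C})$ has topological and Hausdorff dimension exactly $n$.
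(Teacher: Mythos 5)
Your proposal is correct and follows the same route as the paper: the paper's proof is a one-line assertion that bi-Lipschitz equivalence to a normed space forces $\vol_\mathcal{C}\bigl(B_\mathcal{C}(x,r)\bigr)\leq c\,r^n$, hence finite upper asymptotic volume, and then invokes Proposition~\ref{finite_upperasvol}. You simply supply the details the paper omits, via the identification of the Busemann volume with the $n$-dimensional Hausdorff measure and its standard behaviour under Lipschitz maps, and these details are sound.
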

\begin{proof}
  This implies that there is a constant $c$ such that the volume of a ball of 
radius $r$ is less $c\cdot r^n$, henceforth the asymptotic volume is finite.
\end{proof}




\begin{thebibliography}{BBI01}
\bibitem[Ale39]{alexandrov}
 \bgroup\bf A.~D. Alexandroff\egroup{}.
\newblock Almost everywhere existence of the second differential of a convex
  function and some properties of convex surfaces connected with it.
\newblock {\em Leningrad State Univ. Annals [Uchenye Zapiski] Math. Ser.},
  6:3--35, 1939.

\bibitem[AB09]{alvarez_berck} \bgroup\bf J.C~\'Alvarez~Paiva\egroup{}  \andname{} \bgroup\bf G.~Berck\egroup{}.
\newblock What is wrong with the Hausdorff measure in Finsler spaces.  
\newblock\emph{Adv. Math.}  204(2):647--663, 2006.

\bibitem[AF98]{alvarez_fernandez} \bgroup\bf J.C~\'Alvarez~Paiva\egroup{} \andname{} \bgroup\bf E.~Fernandes\egroup{}.
\newblock Crofton formulas in projective Finsler spaces.  
\newblock \emph{Electron. Res. Announc. Amer. Math. Soc.}  4:91--100, 1998,


\bibitem[Ben60]{benzecri}
\bgroup\bf J.-P. Benz{\'e}cri\egroup{} 
\newblock Sur les vari\'et\'es localement
  affines et localement projectives
\newblock \emph{Bull. Soc. Math. France}
  88:p.~229--332, 1960.


\bibitem[Ber77]{berger}
\bgroup\bf M.~Berger\egroup{}.
\newblock {\em G{\'e}om{\'e}trie}, volume 1/actions de groupes, espaces affines
  et projectifs.
\newblock Cedic/Fernand Nathan, 1977.


\bibitem[Ber09]{berck} \bgroup\bf G.~Berck\egroup{}.
\newblock Minimality of totally geodesic submanifolds in Finsler geometry.
\newblock  \emph{Math. Ann.}  343(4):955--973, 2009.

\bibitem[BBV10]{berck_Bernig_Vernicos} \bgroup\bf G.~Berck\egroup{}, \bgroup\bf
  A.~Bernig\egroup{}  \andname{} \bgroup\bf
  C.~Vernicos\egroup{}.  \newblock Volume Entropy of Hilbert Geometries. 
\newblock \emph{Pacific J. of Math}  245(2):201--225, 2010.

\bibitem[BER09]{bernig}
\bgroup\bf A.~Bernig\egroup{}.
\newblock{H}ilbert {G}eometry of {P}olytopes.
\newblock \emph{Archiv der Mathematik} 92:314-324, 2009.

\bibitem[BI95]{bi95} \bgroup\bf D.~Burago\egroup{} \andname{} \bgroup\bf S.~Ivanov\egroup{}.
  \newblock On Asymptotic volume of Tori
\newblock \emph{Geom. Funct. Anal.} 5(5):800-808, 1995.

\bibitem[BBI01]{bbi} \bgroup\bf D.~Burago\egroup{}, \bgroup\bf
  Y.~Burago\egroup{} \andname{} \bgroup\bf S.~Ivanov\egroup{}.
  \newblock \textsl{A Course in Metric Geometry}, \volumename{}~33
  \ofname{} \textsl{Graduate Studies in Mathematics}.  \newblock
  American Mathematical Society, 2001.



\bibitem[CV07]{ccv} \bgroup\bf B.~Colbois\egroup{} \andname{} \bgroup\bf
  C.~Vernicos\egroup{}.  \newblock Les g\'eom\'etries de Hilbert sont \`a g\'eom\'etrie locale born\'ee
\newblock \emph{Annales de l'institut Fourier}, 57(4):1359-1375, 2007.


\bibitem[CVV04]{cvv1} \bgroup\bf B.~Colbois\egroup{}, \bgroup\bf
  C.~Vernicos\egroup{}  \andname{} \bgroup\bf
  P.~Verovic\egroup{}.  \newblock L'aire des triangles id\'eaux en g\'eom\'etrie de Hilbert.
\newblock \emph{Enseign. Math.}, 50(3--4):203--237, 2004.

\bibitem[CVV06]{cvv2} \bgroup\bf B.~Colbois\egroup{},  \bgroup\bf
  C.~Vernicos\egroup{}  \andname{} \bgroup\bf
  P.~Verovic\egroup{}.  \newblock Area of ideal triangles and gromov hyperbolicity
in Hilbert geometries.
\newblock  \emph{Illinois J. of Math.}, 52(1):319-343, 2008.

\bibitem[CVV06]{cvv3} \bgroup\bf B.~Colbois\egroup{},  \bgroup\bf
  C.~Vernicos\egroup{}  \andname{} \bgroup\bf
  P.~Verovic\egroup{}.  \newblock Hilbert Geometry for convex polygonal domains
\newblock  \emph{J. of Geometry}, 100:37--64, 2011.

\bibitem[CV04]{cpv} \bgroup\bf B.~Colbois\egroup{} \andname{} \bgroup\bf
  P.~Verovic\egroup{}.  \newblock Hilbert geometry for strictly
  convex domain.  \newblock\emph{Geom. Dedicata}, 105:29--42, 2004

\bibitem[CV11]{cpv2} \bgroup\bf B.~Colbois\egroup{} \andname{} \bgroup\bf
  P.~Verovic\egroup{}.  \newblock Hilbert domains that admit a quasi-isometric embedding into
Euclidean space, 
\newblock \emph{Advances in Geometry}, 11(3):465-470, 2011.


\bibitem[dlH93]{dlharpe} \bgroup\bf P.~de~la Harpe\egroup{}.
  \newblock On {H}ilbert's metric for simplices.  \newblock \Inname{}
  \textsl{Geometric group theory, Vol.\ 1 (Sussex, 1991)},
  \pagesname{} 97--119. Cambridge Univ. Press, 1993.


\bibitem[Hil71]{dhilbert} \bgroup\bf D.~Hilbert\egroup{}.  \newblock
  \textsl{Les fondements de la G\'eom\'etrie}, \'edition critique
    pr\'epar\'ee par P.~Rossier.  \newblock Dunod, 1971.



\bibitem[SM00]{so} \bgroup\bf {\'E}.~Soci{\'e}-M{\'e}thou\egroup{}.
  \newblock \textsl{Comportement asymptotiques et rigidit\'es en
    g\'eom\'etries de Hilbert},  \newblock th\`ese de doctorat de
  l'universit\'e de Strasbourg, 2000.  \newblock
  http\deuxpoints//www-irma.u-strasbg.fr/irma/pu\-bli\-ca\-tions/2000/00044.ps.gz.


\bibitem[Ver09]{ver09} \bgroup\bf C.~Vernicos\egroup{}.
\newblock Spectral Radius and amenability in Hilbert Geometry.
 \newblock \textsl{Houston Journal of Math.}, 35(4):1143-1169, 2009.

\bibitem[Ver-a]{ver08} \bgroup\bf C.~Vernicos\egroup{}.
\newblock Lipschitz caracterisation of polytopal Hilbert geometries
\newblock preprint, arXiv :0812.1032v1 [math.DG].

\bibitem[Ver-b]{ver12} \bgroup\bf C.~Vernicos\egroup{}.
\newblock Approximability of convex bodies and volume entropy of Hilbert geometries.
\newblock preprint 2012.

\end{thebibliography}
\end{document}

@article {MR2471607,
    AUTHOR = {Berck, Gautier},
     TITLE = {Minimality of totally geodesic submanifolds in {F}insler
              geometry},
   JOURNAL = {Math. Ann.},
  FJOURNAL = {Mathematische Annalen},
    VOLUME = {343},
      YEAR = {2009},
    NUMBER = {4},
     PAGES = {955--973},
      ISSN = {0025-5831},
     CODEN = {MAANA},
   MRCLASS = {53C60 (53C40)},
  MRNUMBER = {2471607 (2009m:53196)},
MRREVIEWER = {Qiaoling Xia},
       DOI = {10.1007/s00208-008-0299-z},
       URL = {http://dx.doi.org/10.1007/s00208-008-0299-z},
}

@article {MR2249627,
    AUTHOR = {{\'A}lvarez Paiva, J. C. and Berck, G.},
     TITLE = {What is wrong with the {H}ausdorff measure in {F}insler
              spaces},
   JOURNAL = {Adv. Math.},
  FJOURNAL = {Advances in Mathematics},
    VOLUME = {204},
      YEAR = {2006},
    NUMBER = {2},
     PAGES = {647--663},
      ISSN = {0001-8708},
     CODEN = {ADMTA4},
   MRCLASS = {53C60 (49Q05 53C65)},
  MRNUMBER = {2249627 (2007g:53079)},
MRREVIEWER = {Constantin Vernicos},
       DOI = {10.1016/j.aim.2005.06.007},
       URL = {http://dx.doi.org/10.1016/j.aim.2005.06.007},
}
